\newtheorem{theorem}{\bf Theorem}[section]
\newtheorem{corollary}[theorem]{\bf Corollary}
\newtheorem{lemma}[theorem]{\bf Lemma}
\newtheorem{remark}[theorem]{\bf Remark}
\newtheorem{definition}[theorem]{\bf Definition}
\newcommand{\qed}{\hfill $\square$ \bigskip}
\begin{document}

\baselineskip=0.30in
\vspace*{40mm}

\begin{center}
{\LARGE \bf\sc The Edge-Szeged Index and the PI Index of Benzenoid Systems in Linear Time}
\bigskip \bigskip

{\large \bf Niko Tratnik
}
\bigskip\bigskip

\baselineskip=0.20in

Faculty of Natural Sciences and Mathematics, University of Maribor, Slovenia \\
{\tt niko.tratnik1@um.si}
\medskip

\bigskip\medskip

(Received August 2, 2016)

\end{center}

\noindent
{\bf Abstract}

\vspace{3mm}\noindent
The edge-Szeged index of a graph $G$ is defined as $Sz_e(G) = \sum_{e=uv \in E(G)}m_u(e)m_v(e)$, where $m_u(e)$ denotes the number of edges of $G$ whose distance to $u$ is smaller than the distance to $v$ and $m_v(e)$ denotes the number of edges of $G$ whose distance to $v$ is smaller than the distance to $u$. Similarly, the PI index is defined as $PI(G) = \sum_{e=uv \in E(G)}(m_u(e) + m_v(e))$. In this paper it is shown how the problem of calculating the indices of a benzenoid system can be reduced to the problem of calculating weighted indices of three different weighted quotient trees. Furthermore, using these results, algorithms are established that, for a given benzenoid system $G$ with $m$ edges, compute the edge-Szeged index and the PI index of $G$ in $O(m)$ time. Moreover, it is shown that the results can also be applied to weighted benzenoid systems.

\vspace{5mm}

\baselineskip=0.30in



\section{Introduction}
In the present paper we study some distance-based topological indices, which are closely related to the Wiener index. Their history goes back to $1947$, when H. Wiener used the distances in the molecular graphs of alkanes to calculate their boiling points \cite{wiener}. This research has led to the Wiener index, which is defined as
$$W(G) = \sum_{\lbrace u, v \rbrace \subseteq V(G)} d_G(u,v)$$
and is one of the most popular molecular descriptors. The Wiener index, due to its correlation with a large number of physico-chemical properties of organic molecules and its interesting mathematical properties, has been extensively studied in both theoretical and chemical literature. Nowadays this index is used for preliminary screening of drug molecules and for predicting binding energy of protein-ligand complex at a preliminary stage. Furthermore, the same quantity has been studied by mathematicians as the gross status, the distance of graphs and the transmission in graphs. 

Later, the Szeged index and the PI index were introduced and it was shown that they also have many applications, for example in drug modelling \cite{drug}, in networks \cite{klavzar-2013,pisanski} and in biological activities of a large number of diversified and complex compounds \cite{pi}. 

It is known that if $T$ is a tree, then
$$W(T) = \sum_{e =uv \in E(T)}n_u(e)n_v(e),$$
where $n_u(e)$ denotes the number of vertices of $T$ whose distance to $u$ is smaller than the distance to $v$ and $n_v(e)$ denotes the number of vertices of $T$ whose distance to $v$ is smaller than the distance to $u$. Therefore, a proper generalization was defined in \cite{gut_sz} as

$$Sz(G) = \sum_{e=uv \in E(G)}n_u(e)n_v(e).$$
This structural descriptor was eventually named as the Szeged index. Motivated by the success of the Szeged index, in \cite{def_pi} a seemingly similar molecular descriptor, that is called the PI index (or the edge-PI index) was defined with
$$PI_e(G) = \sum_{e=uv \in E(G)}\big(m_u(e) + m_v(e)\big),$$
where the numbers $m_u(e)$ and $m_v(e)$ are the edge-variants
of the numbers $n_u(e)$ and $n_v(e)$. Later \cite{khal1}, a vertex version of the PI index, called the vertex-PI index was defined as
$$PI_v(G) = \sum_{e=uv \in E(G)}\big(n_u(e) + n_v(e)\big).$$
Obviously, for any bipartite graph it follows $PI_v(G) = |V(G)||E(G)|$ (see \cite{khal1} and Lemma \ref{alg31}). Since benzenoid systems are bipartite graphs, the vertex-PI index can be easily computed by multiplying the number of vertices and the number of edges.

Finally, the edge-version of the Szeged index, the edge-Szeged index, was defined in \cite{gut} as
$$Sz_e(G) = \sum_{e=uv \in E(G)}m_u(e)m_v(e).$$
It is also known that if $T$ is a tree, then the edge-Szeged index $Sz_e(T)$ equals the edge-Wiener index $W_e(T)$ (for the details see \cite{azari}). Therefore, the edge-Szeged index can be viewed as a generalization of the edge-Wiener index. The Wiener index, the Szeged index, the edge-Szeged index and the PI index are some of the central and most commonly studied distance-based topological indices. For some recent research on the edge-Szeged index see \cite{al-fozan,ashrafi,khal,khalifeh-2009,azari} and on the PI indices see \cite{ilic,john,khalifeh-2009,klavzar-2007,koor}.

In the present paper linear time algorithms for the edge-Szeged index and the PI index of benzenoid systems are developed.  Benzenoid systems form one of the most extensively studied family of chemical graphs. Although similar results could also be obtained for the vertex-PI index, they are not needed since the computation of this index in the case of bipartite graphs is trivial. However, in \cite{klavzar-2007} the formula to calculate the PI index using PI partitions (orthogonal cuts in the case of benzenoid systems) was obtained and in \cite{john} it was shown how the cut method can be used to calculate the PI index. Therefore, following these results, a fast algorithm that calculates the PI index of a benzenoid system could be obtained. But the purpose of this paper is also to show the connection between the PI index of a benzenoid system $G$ and the weighted indices of corresponding weighted trees. Also, a method for calculating the edge-Szeged index of benzenoid chains was obtained in \cite{wang}.

The algorithms are parallel to the linear time algorithms for the Wiener index, the Szeged index and the edge-Wiener index developed in \cite{chepoi-1997,kelenc}. We use the cut method to reduce the problem of calculating the indices of benzenoid systems to the problem of calculating weighted indices of three different weighted quotient trees (for more information about the cut method see \cite{klavzar-2008,klavzar-2015}). Some of the corresponding weighted indices of the edge-Szeged index, the PI index and the vertex-PI index have been defined in different ways (for example, see \cite{ilic, khal}). In this paper we carefully define the corresponding weighted indices such that they are suitable for our purpose.

The paper is organized in $6$ sections. In the next section we give definitions and introduce some basic notation needed later. In Section \ref{sec:theory} we use the cut method to prove that the edge-Szeged index and the PI index of a benzenoid system can be expressed as the sum of corresponding weighted indices of weighted trees. Using these results the algorithms are presented in Section~\ref{sec:algorithm}. In Section \ref{example} it is shown how the results from Section \ref{sec:theory} can be used to calculate the indices by hand. In the last section a generalization of the method to weighted graphs is described.

\section{Preliminaries}

Unless stated otherwise, the graphs considered in this paper are simple, finite and connected. We define $d_G(u,v)$ to be the usual shortest-path distance between vertices $u, v\in V(G)$. In addition, for a vertex $x \in V(G)$ and an edge $e=ab \in E(G)$ we set 
\begin{equation*}
d_G(x,e) = \min \lbrace d_G(x,a), d_G(x, b) \rbrace\,.
\end{equation*}
Let $G$ be a graph and $e=uv$ an edge of $G$. Throughout the paper we will use the following notation:
$$N_1(e|G) = \lbrace x \in V(G) \ | \ d_G(x,u) < d_G(x,v) \rbrace, $$
$$N_2(e|G) = \lbrace x \in V(G) \ | \ d_G(x,v) < d_G(x,u) \rbrace, $$
$$M_1(e|G) = \lbrace f \in E(G) \ | \ d_G(f,u) < d_G(f,v) \rbrace, $$
$$M_2(e|G) = \lbrace f \in E(G) \ | \ d_G(f,v) < d_G(f,u) \rbrace. $$
The {\em Szeged index} of a graph $G$ is defined as $Sz(G) =\sum_{e \in E(G)}|N_1(e|G)| \cdot |N_2(e|G)|$. To emphasize that it is the vertex-Szeged index, we will also write $Sz_v(G)$ for $Sz(G)$. The edge-Szeged index is defined with the formula
$$Sz_e(G) =\sum_{e \in E(G)}|M_1(e|G)| \cdot |M_2(e|G)|.$$

\noindent
The \textit{PI index} and the \textit{vertex-PI index} of a graph $G$ are defined as
\begin{eqnarray*}
PI_e(G) & = & \sum_{e \in E(G)}\big(|M_1(e|G)| + |M_2(e|G)|\big), \\
PI_v(G) & = & \sum_{e \in E(G)}\big(|N_1(e|G)| + |N_2(e|G)|\big).
\end{eqnarray*}

We extend the above definitions of the indices to weighted graphs as follows. Let $G$ be a graph and let $w:V(G)\rightarrow {\mathbb R}^+$  and $w':E(G)\rightarrow {\mathbb R}^+$ be given functions. Then $(G,w')$ and $(G,w,w')$ are an {\em edge-weighted graph} and a {\em vertex-edge weighted graph}, respectively. Furthermore, if $e=uv$ is an edge of $G$ and $i \in \lbrace 1, 2\rbrace$, set
\begin{eqnarray*}
n_i(e|G) & = & \sum_{x \in N_i(e|G)} w(x), \\
m_i(e|G) & = & \sum_{f \in M_i(e|G)} w'(f), \\
r_i(e|G) & = & \sum_{x \in N_i(e|G)} w(x) + \sum_{f \in M_i(e|G)} w'(f). 
\end{eqnarray*}

\noindent
Then the Szeged index, the edge-Szeged index, and the vertex-edge-Szeged index of these weighted graphs are defined as
\begin{eqnarray*}
Sz_v(G,w,w') & = & \sum_{e \in E(G)} w'(e)n_1(e|G)n_2(e|G), \\
Sz_e(G,w') & = & \sum_{e \in E(G)} w'(e)m_1(e|G)m_2(e|G), \\
Sz_{ve}(G,w,w') & = & \sum_{e \in E(G)} w'(e)\big(n_1(e|G)m_2(e|G) + n_2(e|G)m_1(e|G)\big). 
\end{eqnarray*}

\noindent
The corresponding PI indices of these weighted graphs are defined as
\begin{eqnarray*}
PI_e(G,w') & = & \sum_{e \in E(G)} w'(e)\big(m_1(e|G) + m_2(e|G)\big)\\
PI_v(G,w,w') & = & \sum_{e \in E(G)} w'(e)\big(n_1(e|G) + n_2(e|G)\big).
\end{eqnarray*}

\begin{remark}
Let $(T,w,w')$ be a weighted tree. For an edge $e$ of $T$, let $T_1$ and $T_2$ be the connected components of $T \setminus e$. Obviously, for $i=1,2$ it holds
\begin{eqnarray*}
n_i(e|T) & = & \sum_{u \in V(T_i)}w(u), \\
m_i(e|T) & = & \sum_{f \in E(T_i)}w'(f), \\
r_i(e|T) & = & \sum_{u \in V(T_i)}w(u) + \sum_{f \in E(T_i)}w'(f).
\end{eqnarray*}

\end{remark}

Let ${\cal H}$ be the hexagonal (graphite) lattice and let $Z$ be a cycle on it. Then a {\em benzenoid system} is induced by the vertices and edges of ${\cal H}$, lying on $Z$ and in its interior. The edge set of a benzenoid system $G$ can be naturally partitioned into sets $E_1, E_2$, and $E_3$ of edges of the same direction. For $i \in \lbrace 1, 2, 3 \rbrace$, set $G_i = G - E_i$. Then the connected components of the graph $G_i$ are paths. The quotient graph $T_i$, $1\le i\le 3$, has these paths as vertices, two such paths (i.e. components of $G_i$) $P_1$ and $P_2$ being adjacent in $T_i$ if some edge in $E_i$ joins a vertex of $P_1$ to a vertex of $P_2$. It is known that $T_1$, $T_2$ and $T_3$ are trees (see \cite{chepoi-1996}).

\noindent
An \textit{elementary cut} of a benzenoid system $G$ is a line segment that starts at
the center of a peripheral edge of a benzenoid system $G$,
goes orthogonal to it and ends at the first next peripheral
edge of $G$. Details on elementary cuts can be found elsewhere (for example, see \cite{gut}). In what follows we denote by $C$ the set of edges in an elementary cut. The number of edges in $C$ will be denoted in a standard way, by $|C|$. 

\noindent
The set of all elementary cuts of $G$ will be denoted by $\mathcal{C}$. Furthermore, if $i \in \lbrace 1, 2, 3 \rbrace$, the set $\mathcal{C}_i$ denotes the set of all elementary cuts $C$ of $G$ such that $C \subseteq E_i$. Then $\lbrace \mathcal{C}_1, \mathcal{C}_2, \mathcal{C}_3 \rbrace$ is a partition of the set $\mathcal{C}$. It is obvious that there is a natural bijection between elementary cuts in $\mathcal{C}_i$ and edges of $T_i$.

\section{Reduction to the quotient trees}
\label{sec:theory}
Let $G$ be a benzenoid system and let $T_1$, $T_2$, $T_3$ be its quotient trees as defined in the preliminaries. We next extend the quotient trees $T_i$, $i \in \lbrace 1, 2, 3 \rbrace$, to weighted trees $(T_i, w'_i)$ and $(T_i, w_i, w'_i)$ as follows: 
\begin{itemize}
\item for $x \in V(T_i)$, let $w_i(x)$ be the number of edges in the component (path) $x$ of $G_i$;
\item for $e = xy \in E(T_i)$, let $w_i'(e)$ be the number of edges between components (paths) $x$ and $y$.
\end{itemize}

\noindent
Obviously, if $C$ is an elementary cut corresponding to the edge $e \in E(T_i)$, then $|C| = w_i'(e)$.

\noindent
We start with the following lemma, which is used in the proof of Theorem \ref{sz_sum} and Theorem \ref{pi_sum}. If $C$ is an elementary cut of $G$, then it is clear that for every two edges $f_1$ and $f_2$ in $C$ it follows (without loss of generality) $M_1(f_1|G) = M_1(f_2|G)$ and $M_2(f_1|G) = M_2(f_2|G)$. Therefore, we can define $M_1(C|G) = M_1(f|G)$ and $M_2(C|G) = M_2(f|G)$ for some $f \in C$.

\begin{lemma}
\label{lema}
Let $G$ be a benzenoid system and let $C$ be an elementary cut of $G$ connecting paths $P_1$ and $P_2$ of $G-E_i$ for $i \in \lbrace 1, 2, 3\rbrace$. If $P_1$ corresponds to vertex $u \in V(T_i)$ and $P_2$ corresponds to vertex $v \in V(T_i)$ and $e=uv \in E(T_i)$, then it holds (without loss of generality)
$$|M_1(C|G)| = n_1(e|T_i) + m_1(e|T_i)$$
and
$$|M_2(C|G)| = n_2(e|T_i) + m_2(e|T_i).$$
\end{lemma}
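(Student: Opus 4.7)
My plan is to exploit the partial-cube structure of benzenoid systems. Removing the edges of the elementary cut $C$ disconnects $G$ into exactly two connected subgraphs, which I will call $H_1$ (the one containing $P_1$) and $H_2$ (the one containing $P_2$). Fix any edge $f_0 = u_0v_0 \in C$ with $u_0 \in V(P_1)$ and $v_0 \in V(P_2)$. The standard property of orthogonal cuts states that for every vertex $x \in V(H_1)$ one has $d_G(x,u_0) < d_G(x,v_0)$, because every $x$-to-$v_0$ walk must traverse at least one edge of $C$. This inequality transfers directly to edges: for any $f \in E(H_1)$ one has $d_G(f,u_0) < d_G(f,v_0)$, and hence $E(H_1) \subseteq M_1(C|G)$; symmetrically $E(H_2) \subseteq M_2(C|G)$. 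Since $E(G) = E(H_1) \sqcup E(H_2) \sqcup C$ and edges of $C$ are excluded from both $M_1(C|G)$ and $M_2(C|G)$, these containments are actually equalities.

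Next I translate the structure of $H_1$ to the tree $T_i$. Removing $e=uv$ from $T_i$ splits it into two subtrees, and by the definitions of $N_1(e|T_i)$ and $M_1(e|T_i)$, the one containing $u$ has vertex set $N_1(e|T_i)$ and edge set $M_1(e|T_i)$. Since all edges of $G$ joining two distinct path-components of $G_i$ belong to some elementary cut in $\mathcal{C}_i$, two path-components lie in the same connected component of $G - C$ if and only if the corresponding vertices of $T_i$ lie in the same component of $T_i - e$. Consequently, the path-components of $G_i$ contained in $H_1$ are exactly those indexed by vertices in $N_1(e|T_i)$, and the elementary cuts lying wholly inside $H_1$ are exactly those indexed by edges in $M_1(e|T_i)$.

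To finish, I count $|E(H_1)|$ by sorting its edges according to direction: edges that belong to some path-component of $G_i$ contribute $\sum_{x \in N_1(e|T_i)} w_i(x) = n_1(e|T_i)$ by definition of $w_i$, while edges that belong to an elementary cut of $\mathcal{C}_i$ other than $C$ contribute $\sum_{f \in M_1(e|T_i)} w_i'(f) = m_1(e|T_i)$ by definition of $w_i'$. Summing yields $|M_1(C|G)| = n_1(e|T_i) + m_1(e|T_i)$, and repeating the argument with $H_2$ in place of $H_1$ gives the second equality. The main obstacle I expect is the first paragraph: one must justify cleanly that $M_1(C|G)$ coincides with $E(H_1)$ rather than merely contains or is contained in it, which requires the orthogonal-cut property of benzenoid systems; once this is in hand, the remainder is a straightforward accounting exercise in the weights $w_i$ and $w_i'$.
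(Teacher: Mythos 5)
Your proposal is correct and follows essentially the same route as the paper: identify $M_1(C|G)$ with the edge set of the component of $G-C$ containing $P_1$, match the two components of $T_i - e$ with the two sides of the cut, and then count the edges on the $u$-side by sorting them into path edges (giving $n_1(e|T_i)$) and cut edges (giving $m_1(e|T_i)$). The only difference is one of rigour: you explicitly justify, via the orthogonal-cut property, that $M_1(C|G)$ equals (not merely contains) $E(H_1)$ and that edges of $C$ itself land in neither set, steps the paper's proof passes over as obvious.
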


\begin{proof}
Let $C$ be an elementary cut of $G$ connecting paths $P_1$ and $P_2$ of $G-E_i$ for $i \in \lbrace 1, 2, 3\rbrace$. 
Obviously, the number $n_1(e|T_i)$ represents the number of edges of $G$ corresponding to the vertices of $T_i$ that are closer to $u$ than to $v$. Analogously, the number $m_1(e|T_i)$ represents the number of edges of $G$ corresponding to the edges of $T_i$ that are closer to $u$ than to $v$. Furthermore, the edges and vertices of $T_i$ that are closer to $u$ than to $v$ represent the paths and elementary cuts of $G-E_i$, the edges of which are in the set $M_1(C|G)$. Hence, $|M_1(C|G)| = n_1(e|T_i) + m_1(e|T_i)$. To prove that $|M_2(C|G)| = n_2(e|T_i) + m_2(e|T_i)$ we can use very similar reasoning. Therefore, we are done. \qed
\end{proof}

\subsection{The edge-Szeged index}

The following theorem is the basis for the computation of the edge-Szeged index.
\begin{theorem}
\label{sz_sum}
If $G$ is a benzenoid system and $(T_i,w_i,w_i')$, $1\le i\le 3$, are the corresponding weighted quotient trees, then
$$Sz_e(G)= \sum_{i=1}^{3}\left( Sz_v(T_i,w_i,w_i') + Sz_e(T_i,w_i') + Sz_{ve}(T_i,w_i,w_i')\right).$$
\end{theorem}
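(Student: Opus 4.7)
The plan is to use the cut method together with Lemma~\ref{lema} to translate the sum defining $Sz_e(G)$ into a sum over edges of the three quotient trees, and then to algebraically expand a simple product.

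First I would group the edges of $G$ according to their elementary cut. Since the elementary cuts partition $E(G)$, and since every edge $f$ inside a cut $C$ yields the same pair of sets $M_1(f|G)$ and $M_2(f|G)$, the defining sum reorganizes as
$$Sz_e(G)=\sum_{f\in E(G)}|M_1(f|G)|\cdot |M_2(f|G)|=\sum_{C\in\mathcal{C}}|C|\cdot |M_1(C|G)|\cdot |M_2(C|G)|.$$
Next I would split the outer sum using the partition $\mathcal{C}=\mathcal{C}_1\cup\mathcal{C}_2\cup\mathcal{C}_3$ and use the natural bijection between $\mathcal{C}_i$ and $E(T_i)$, recalling that if $C$ corresponds to $e\in E(T_i)$ then $|C|=w_i'(e)$. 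This gives
$$Sz_e(G)=\sum_{i=1}^{3}\sum_{e\in E(T_i)}w_i'(e)\,|M_1(C|G)|\,|M_2(C|G)|.$$

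Now I would plug in Lemma~\ref{lema}, which, for the edge $e=uv\in E(T_i)$ corresponding to the cut $C$, gives $|M_1(C|G)|=n_1(e|T_i)+m_1(e|T_i)$ and $|M_2(C|G)|=n_2(e|T_i)+m_2(e|T_i)$. Expanding the product
$$\bigl(n_1(e|T_i)+m_1(e|T_i)\bigr)\bigl(n_2(e|T_i)+m_2(e|T_i)\bigr)$$
as the four-term sum $n_1n_2+m_1m_2+n_1m_2+n_2m_1$, I would then recognize each piece, after weighting by $w_i'(e)$ and summing over $e\in E(T_i)$, as exactly one of the three weighted-tree indices: the $n_1n_2$ term is $Sz_v(T_i,w_i,w_i')$, the $m_1m_2$ term is $Sz_e(T_i,w_i')$, and the mixed pair $n_1m_2+n_2m_1$ is $Sz_{ve}(T_i,w_i,w_i')$. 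Summing over $i=1,2,3$ yields the claimed identity.

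The only nontrivial ingredient is Lemma~\ref{lema}, which has already been established; the rest is just bookkeeping with the elementary-cut partition and a single algebraic expansion, so I do not anticipate any real obstacle. The point to be careful about is the "without loss of generality" choice of labels $M_1,M_2$ versus $n_1,m_1,n_2,m_2$: one must fix the orientation of the cut once and then apply Lemma~\ref{lema} consistently, which is automatic because the products $n_1n_2$, $m_1m_2$ and $n_1m_2+n_2m_1$ are all symmetric in the indices $1\leftrightarrow 2$.
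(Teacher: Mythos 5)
Your proposal is correct and follows essentially the same route as the paper's own proof: partition $E(G)$ by elementary cuts, rewrite the sum over $\mathcal{C}=\mathcal{C}_1\cup\mathcal{C}_2\cup\mathcal{C}_3$ via the bijection with $E(T_i)$, apply Lemma~\ref{lema}, and expand the product into the three weighted indices. Your closing remark about the symmetry in $1\leftrightarrow 2$ justifying the ``without loss of generality'' choice is a nice touch, though the paper leaves it implicit.
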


\begin{proof}
We have
$$Sz_e(G) = \sum_{e \in E(G)}|M_1(e|G)|\cdot |M_2(e|G)| = \sum_{C \in \mathcal{C}}|C|\cdot |M_1(C|G)|\cdot |M_2(C|G)|,$$

\noindent
since for every two edges $f_1,f_2 \in C$ it holds $M_1(f_1|G)=M_1(f_2|G)$ and $M_2(f_1|G)=M_2(f_2|G)$ (see also \cite{gut}). Recall that the set $\lbrace \mathcal{C}_1, \mathcal{C}_2, \mathcal{C}_3 \rbrace $ is a partition of $\mathcal{C}$ and there exists a bijection between elements of $\mathcal{C}_i$ and edges of $T_i$ such that $|C|=w_i'(e)$ if an elementary cut $C$ corresponds to the edge $e$ of $T_i$. Using Lemma \ref{lema} we obtain

\begin{eqnarray*}
Sz_e(G) & = & \sum_{i=1}^{3}\sum_{C \in \mathcal{C}_i}|C|\cdot |M_1(C|G)|\cdot |M_2(C|G)| = \\
  & = & \sum_{i=1}^{3}\sum_{e \in E(T_i)}w_i'(e)\big(n_1(e|T_i) + m_1(e|T_i)\big)\big(n_2(e|T_i) + m_2(e|T_i)\big)= \\
  & = & \sum_{i=1}^{3}\sum_{e \in E(T_i)}w_i'(e)n_1(e|T_i)n_2(e|T_i) + \sum_{i=1}^{3}\sum_{e \in E(T_i)}w_i'(e)m_1(e|T_i)m_2(e|T_i) + \\
  & + & \sum_{i=1}^{3}\sum_{e \in E(T_i)}w_i'(e)\left(n_1(e|T_i)m_2(e|T_i) + n_2(e|T_i)m_1(e|T_i) \right) = \\
  & = & \sum_{i=1}^{3}\left( Sz_v(T_i,w_i,w_i') + Sz_e(T_i,w_i') + Sz_{ve}(T_i,w_i,w_i')\right).
\end{eqnarray*} \qed
\end{proof}

\noindent
In order to simplify the notation and the algorithm for computing the edge-Szeged index of a benzenoid system, we introduce a new concept -  the weighted total-Szeged index.

\begin{definition}
Let $(G,w,w')$ be a weighted graph. The weighted total-Szeged index of a graph $G$ is defined as
$$Sz_t(G,w,w') = \sum_{e \in E(G)}w'(e)r_1(e|G)r_2(e|G).$$
\end{definition}

\noindent
The following lemma follows easily.
\begin{lemma}
\label{szeged}
If $(G,w,w')$ is a weighted graph, then 
$$Sz_t(G,w,w')=Sz_v(G,w,w') + Sz_e(G,w') + Sz_{ve}(G,w,w').$$
\end{lemma}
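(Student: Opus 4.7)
The plan is essentially a one-line algebraic verification, so the proof plan is short. The key observation is that by the definitions of $r_i$, $n_i$, and $m_i$, one has
$$r_i(e|G) = n_i(e|G) + m_i(e|G) \qquad (i=1,2)$$
for every edge $e \in E(G)$. This is immediate from comparing the defining sums.

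Given this identity, I would expand the product appearing in $Sz_t(G,w,w')$:
$$r_1(e|G)\, r_2(e|G) = n_1(e|G)\, n_2(e|G) + m_1(e|G)\, m_2(e|G) + n_1(e|G)\, m_2(e|G) + n_2(e|G)\, m_1(e|G).$$
Multiplying by $w'(e)$ and summing over all $e \in E(G)$, the linearity of the sum lets me split the right-hand side into three pieces, which match term-by-term the definitions of $Sz_v(G,w,w')$, $Sz_e(G,w')$, and $Sz_{ve}(G,w,w')$ given earlier in the excerpt. This yields exactly
$$Sz_t(G,w,w') = Sz_v(G,w,w') + Sz_e(G,w') + Sz_{ve}(G,w,w').$$

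There is essentially no obstacle here: the lemma is a bookkeeping identity whose sole purpose is to package the three Szeged-type contributions of Theorem~\ref{sz_sum} into a single weighted index, so that the algorithm in Section~\ref{sec:algorithm} need only compute one quantity ($Sz_t$) per quotient tree rather than three separate ones. The only thing to be careful about is that the definition of $r_i$ combines a sum of vertex weights with a sum of edge weights (which is legitimate since both take values in $\mathbb{R}^+$), so the cross terms $n_1 m_2$ and $n_2 m_1$ really do appear symmetrically and account exactly for $Sz_{ve}$.
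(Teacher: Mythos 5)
Your proposal is correct and follows exactly the same route as the paper: the paper's proof likewise expands $r_1(e|G)r_2(e|G) = \big(n_1(e|G)+m_1(e|G)\big)\big(n_2(e|G)+m_2(e|G)\big)$ and identifies the three resulting terms with the three weighted Szeged indices. Nothing is missing.
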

\begin{proof}
For any edge $e \in E(G)$ it holds
$$r_1(e|G)r_2(e|G) = (n_1(e|G) + m_1(e|G))(n_2(e|G) + m_2(e|G)) =$$ $$= n_1(e|G)n_2(e|G) + m_1(e|G)m_2(e|G) + \big(n_1(e|G)m_2(e|G) + n_2(e|G)m_1(e|G)\big)$$
and the proof is complete. \qed
\end{proof}

\noindent
Using Theorem \ref{sz_sum} and Lemma \ref{szeged}, we obtain the following corollary.
\begin{corollary}
\label{sz_cor}
If $G$ is a benzenoid system and $(T_i,w_i,w_i')$, $1\le i\le 3$, are the corresponding weighted quotient trees, then
$$ Sz_e(G)= Sz_t(T_1,w_1,w_1') + Sz_t(T_2, w_2,w_2') + Sz_t(T_3, w_3, w_3').$$
\end{corollary}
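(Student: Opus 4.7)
The plan is a very short one-liner: this corollary is exactly the arithmetic combination of the two preceding results. First I would invoke Theorem \ref{sz_sum}, which already rewrites $Sz_e(G)$ as
$$Sz_e(G) = \sum_{i=1}^{3}\Bigl(Sz_v(T_i,w_i,w_i') + Sz_e(T_i,w_i') + Sz_{ve}(T_i,w_i,w_i')\Bigr).$$
Then, for each fixed $i \in \{1,2,3\}$, I would apply Lemma \ref{szeged} to the weighted tree $(T_i, w_i, w_i')$ in order to identify the $i$-th summand above with $Sz_t(T_i, w_i, w_i')$. Summing over $i$ yields the claimed identity.

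There is really no obstacle here: the definition of $Sz_t$ was introduced precisely so that it packages the three Szeged-type contributions arising on each quotient tree into a single term, and Lemma \ref{szeged} does all the algebraic work of expanding $r_1(e|T_i) r_2(e|T_i) = (n_1 + m_1)(n_2 + m_2)$ edge-by-edge. One sanity check worth performing silently is that the weighting data $(w_i, w_i')$ used in Theorem \ref{sz_sum} matches the weighting data required by the definition of $Sz_t$; this is immediate since both refer to the same weighted quotient tree $(T_i, w_i, w_i')$ introduced at the beginning of the section. No further lemma, no limiting argument, and no case analysis is needed.
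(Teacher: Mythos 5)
Your proof is correct and follows exactly the paper's own route: the corollary is stated in the paper as an immediate consequence of Theorem \ref{sz_sum} combined with Lemma \ref{szeged} applied to each $(T_i,w_i,w_i')$. Nothing is missing.
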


\subsection{The PI index}

The following theorem shows how the problem of computing the PI index of a benzenoid system can be reduced to the problem of calculating the weighted indices of the corresponding weighted quotient trees.
\begin{theorem}
\label{pi_sum}
If $G$ is a benzenoid system and $(T_i,w_i,w_i')$, $1\le i\le 3$, are the corresponding weighted quotient trees, then
$$PI_e(G)= \sum_{i=1}^{3}\big( PI_e(T_i,w_i') + PI_v(T_i,w_i,w_i') \big).$$
\end{theorem}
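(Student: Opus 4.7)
The plan is to imitate the proof of Theorem \ref{sz_sum} almost verbatim, replacing the product $|M_1(C|G)|\cdot|M_2(C|G)|$ by the sum $|M_1(C|G)|+|M_2(C|G)|$. So the overall skeleton is: rewrite the PI index as a sum over elementary cuts, partition the cuts according to the three edge directions, transfer each piece to the corresponding quotient tree via Lemma \ref{lema}, and finally recognize the two resulting kinds of sums on each tree as the weighted edge-PI and vertex-PI indices.

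First, I would observe that since $M_1(f|G)$ and $M_2(f|G)$ are constant as $f$ ranges over a fixed elementary cut $C$, grouping edges of $G$ by their elementary cut yields
\begin{equation*}
PI_e(G) \;=\; \sum_{e \in E(G)}\bigl(|M_1(e|G)|+|M_2(e|G)|\bigr) \;=\; \sum_{C \in \mathcal{C}} |C|\cdot\bigl(|M_1(C|G)| + |M_2(C|G)|\bigr).
\end{equation*}
Next, using the partition $\{\mathcal{C}_1,\mathcal{C}_2,\mathcal{C}_3\}$ of $\mathcal{C}$ together with the natural bijection between $\mathcal{C}_i$ and $E(T_i)$ (under which $|C|=w_i'(e)$ when $C$ corresponds to $e$), and then invoking Lemma \ref{lema}, the sum becomes
\begin{equation*}
\sum_{i=1}^{3}\sum_{e \in E(T_i)} w_i'(e)\bigl((n_1(e|T_i)+m_1(e|T_i)) + (n_2(e|T_i)+m_2(e|T_i))\bigr).
\end{equation*}

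Finally, I would split this as a sum of two pieces, one collecting the $n_1+n_2$ terms and the other the $m_1+m_2$ terms, and recognize them as the weighted vertex-PI and edge-PI indices respectively, by the definitions of $PI_v(T_i,w_i,w_i')$ and $PI_e(T_i,w_i')$. This yields exactly $\sum_{i=1}^{3}\bigl(PI_e(T_i,w_i')+PI_v(T_i,w_i,w_i')\bigr)$.

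Honestly, there is no real obstacle here: all the genuine work was already carried out in Lemma \ref{lema} and in setting up the cut/quotient-tree bijection in the preliminaries. The only mild care needed is to match the formulas exactly with the weighted definitions given earlier, in particular noticing that both $PI_e$ and $PI_v$ on the weighted trees carry the factor $w_i'(e)$ in front of the bracket, which is precisely what the $|C|$ factor supplies on the $G$ side. No auxiliary identity such as Lemma \ref{szeged} is needed, because the PI index is already linear (not bilinear) in the $M_1,M_2$ counts, so the sum decomposes directly without introducing a total-PI analogue.
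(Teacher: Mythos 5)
Your proposal is correct and follows exactly the same route as the paper's proof: rewrite $PI_e(G)$ as a sum over elementary cuts, partition the cuts by direction, apply Lemma \ref{lema} via the bijection with $E(T_i)$, and split the resulting sum into the weighted vertex-PI and edge-PI indices. Your closing remark that no analogue of Lemma \ref{szeged} is needed because the PI index is linear rather than bilinear in the counts is also consistent with the paper, which introduces the total-Szeged index only for the edge-Szeged case.
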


\begin{proof}
We have
$$PI_e(G) = \sum_{e \in E(G)}\big(|M_1(e|G)| + |M_2(e|G)|\big) = \sum_{C \in \mathcal{C}}|C|\big(|M_1(C|G)| + |M_2(C|G)| \big),$$

\noindent
since for every two edges $f_1,f_2 \in C$ it holds $M_1(f_1|G)=M_1(f_2|G)$ and $M_2(f_1|G)=M_2(f_2|G)$. Using very similar arguments as in the proof of Theorem \ref{sz_sum}, we obtain

\begin{eqnarray*}
PI_e(G) & = & \sum_{i=1}^{3}\sum_{C \in \mathcal{C}_i}|C|\big( |M_1(C|G)| + |M_2(C|G)|\big) = \\
  & = & \sum_{i=1}^{3}\sum_{e \in E(T_i)}w_i'(e)\big(n_1(e|T_i) + m_1(e|T_i) + n_2(e|T_i) + m_2(e|T_i)\big)= \\
  & = & \sum_{i=1}^{3}\sum_{e \in E(T_i)}w_i'(e)\big(n_1(e|T_i) + n_2(e|T_i)\big) + \sum_{i=1}^{3}\sum_{e \in E(T_i)}w_i'(e)\big(m_1(e|T_i) + m_2(e|T_i)\big) = \\
  & = & \sum_{i=1}^{3}\big( PI_v(T_i,w_i,w_i') + PI_e(T_i,w_i') \big).
\end{eqnarray*}
\qed
\end{proof}

\section{The algorithms}
\label{sec:algorithm}

In this section we describe the announced algorithms. For a given benzenoid system $G$ we first compute the quotient trees $T_i$ using a procedure called {\tt QuotientTrees}. For each $T_i$ we then compute the vertex weights $w$ and the edge weights $w'$ using a procedure {\tt Weights}. It was proved in \cite{chepoi-1996} that this can be done in linear time. The algorithms thus read as follows:
\vspace{1cm}

\begin{algorithm}[H]\label{alg:szeged}
\SetKwInOut{Input}{Input}\SetKwInOut{Output}{Output}
\DontPrintSemicolon

\Input{Benzenoid system $G$ with $m$ edges}
\Output{$Sz_e(G)$}

 \SetKwFunction{CQT}{QuotientTrees}
 \SetKwFunction{CW}{Weights}
 $(T_1,T_2,T_3) \leftarrow$ \CQT($G$)\;
 \For{$i=1$  {\rm to} $3$}{
 	$(w_i,w'_i) \leftarrow $ \CW($T_i,G$)\;
}
 	$X_1 \leftarrow Sz_t(T_1,w_1,w'_1)$\;
 	$X_2 \leftarrow Sz_t(T_2,w_2,w'_2)$\;
 	$X_3 \leftarrow Sz_t(T_3,w_3,w'_3)$\;
 $Sz_e(G) \leftarrow X_1+X_2+X_3$
 \caption{Edge-Szeged Index of Benzenoid Systems}
\end{algorithm} 

\vspace{0.5cm}

\begin{algorithm}[H]\label{alg:pi}
\SetKwInOut{Input}{Input}\SetKwInOut{Output}{Output}
\DontPrintSemicolon
 
\Input{Benzenoid system $G$ with $m$ edges}
\Output{$PI_e(G)$}

 \SetKwFunction{CQT}{QuotientTrees}
 \SetKwFunction{CW}{Weights}
 $(T_1,T_2,T_3) \leftarrow$ \CQT($G$)\;
 \For{$i=1$  {\rm to} $3$}{
 	$(w_i,w'_i) \leftarrow $ \CW($T_i,G$)\;
 	$X_{i,1} \leftarrow PI_v(T_i,w_i,w'_i)$\;
 	$X_{i,2} \leftarrow PI_e(T_i,w'_i)$\;
 	$Y_i \leftarrow X_{i,1}+X_{i,2}$
 }
 $PI_e(G) \leftarrow Y_1+Y_2+Y_3$
 \caption{PI Index of Benzenoid Systems}
\end{algorithm} 

%
\vspace{1cm}

\noindent
The correctness of the algorithms obviously follows from Corollary \ref{sz_cor} and Theorem \ref{pi_sum}. Therefore, we have to check the time complexity. Recall that weighted quotient trees $(T_i,w_i,w_i')$ can be computed in linear time. The following three lemmas show that the corresponding weighted indices of weighted trees can be computed in linear time. 

\begin{lemma}
\label{alg1}
Let $(T,w,w')$ be a weighted tree with $m$ edges. Then the weighted total-Szeged index $Sz_t(T,w,w')$ can be computed in $O(m)$ time.
\end{lemma}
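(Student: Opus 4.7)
The plan is to reduce the computation to a standard subtree-sum calculation performed by a single depth-first traversal of $T$. First, in $O(m)$ time I would compute the global constants $W = \sum_{u \in V(T)} w(u)$ and $W' = \sum_{f \in E(T)} w'(f)$, and root $T$ at an arbitrary vertex $\rho$. For every non-root vertex $v$, write $e_v$ for the edge joining $v$ to its parent and $T_v$ for the subtree rooted at $v$, and define
$$s(v) = \sum_{u \in V(T_v)} w(u) + \sum_{f \in E(T_v)} w'(f).$$
The post-order recursion
$$s(v) = w(v) + \sum_{c \in \mathrm{children}(v)} \bigl(s(c) + w'(cv)\bigr)$$
computes all the values $s(v)$ simultaneously in $O(m)$ time.

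Next, I would use the remark in the preliminaries to identify $s(v)$ with one of the quantities appearing in $Sz_t$. Removing $e_v$ from $T$ splits it into $T_v$ and its complement, so by that remark $r_1(e_v|T) = s(v)$ (up to choice of labelling, which is harmless since $Sz_t$ is symmetric in its two factors). Because every vertex of $T$ lies in exactly one of the two components and every edge other than $e_v$ lies in exactly one of the two edge sets $E(T_v)$ and $E(T \setminus T_v)$, we obtain
$$r_2(e_v|T) = (W + W') - w'(e_v) - s(v).$$
Substituting into the definition of the weighted total-Szeged index yields the closed form
$$Sz_t(T,w,w') = \sum_{v \neq \rho} w'(e_v)\, s(v) \bigl((W + W') - w'(e_v) - s(v)\bigr),$$
and evaluating this sum requires one additional pass over the non-root vertices, i.e.\ $O(m)$ time.

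Combining the two phases, the overall cost is a constant number of traversals of $T$, each of which runs in $O(m)$, giving the claimed complexity. There is no real obstacle here: correctness is immediate from the remark after the definition of $r_i$, and the only bookkeeping is the orientation of the split at each edge, which is irrelevant because $r_1(e|T) r_2(e|T)$ is symmetric in its two arguments.
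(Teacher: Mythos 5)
Your proposal is correct and follows essentially the same route as the paper: a single rooted post-order traversal computing subtree sums, combined with the complement formula $r_2(e_v|T) = (W+W') - w'(e_v) - s(v)$, exactly as in the paper's proof (which merely tracks the vertex-weight and edge-weight subtree sums as two separate accumulators before multiplying $(n_1+m_1)(n_2+m_2)$, whereas you fold them into the single quantity $s(v)=r_1(e_v|T)$ from the start).
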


\begin{proof}
We will use a method parallel to the method from \cite{chepoi-1997} and \cite{kelenc}. Let $T$ be a rooted tree with a root $x$ and label the vertices of $T$ such that, if a vertex $y$ is labelled $\ell$, then all vertices in the subtree rooted at $y$ have labels smaller than $\ell$. Using the standard BFS (breadth-first search) algorithm this can be done in linear time. Next we visit all the vertices of $T$ according to this labelling (we start with the smallest label). Furthermore, for every vertex $y$, we will adopt weights $w(y)$ and calculate new weights $w''(y)$ and $s(y)$.

\noindent
Assume that we are visiting vertex $y \in V(T)$. The new weight $w(y)$ will be computed as the sum of all the weights of vertices in the subtree rooted at $y$ and $w''(y)$ will be computed as the sum of all weights of the edges in the subtree rooted at $y$. 
\begin{itemize}
\item If $y$ is a leaf, then $w(y)$ is left unchanged and $w''(y)=0$.
\item If $y$ is not a leaf, then update $w(y)$ by adding to it $w(z)$ for all down-neighbours $z$ of $y$, and compute $w''(y)$ as the sum of $w''(z)$ and $w'(e)$ for all down-neighbors $z$ of $y$ and all the corresponding edges $e$.
\end{itemize}

\noindent
Obviously, for every vertex $y$ of the tree $T$, we can consider the subtree rooted at $y$ as a connected component of the graph $T - e$, where $e$ is the up-edge of $y$. Therefore, $n_1(e|T)=w(y)$ and $m_1(e|T)=w''(y)$. Let $n(T)=\sum_{u \in V(T)} w(u)$ and $m(T)=\sum_{e \in E(T)} w'(e)$ (this can be computed in linear time). It follows that $n_2(e|T) = n(T) - w(y)$ and $m_2(e|T) = m(T) -w''(y) - w'(e)$. Let $X$ be the sum of numbers $s(z)$ for all down-neighbours $z$ of $y$ (and $X=0$ if $y$ is a leaf). Finally, set $s(y) = X + w'(e)(n_1(e|T) + m_1(e|T))(n_2(e|T) + m_2(e|T))$ if $y \neq x$ and $s(y) = X$, if $y=x$. It is obvious that $s(x) = Sz_t(T,w,w')$ and the proof is complete. \qed
 
\end{proof}

\begin{lemma}
\label{alg2}
Let $(T,w')$ be a weighted tree with $m$ edges. Then the weighted PI index $PI_e(T,w')$ can be computed in $O(m)$ time.
\end{lemma}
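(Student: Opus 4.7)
My plan is to exploit the fact that in a tree, removing any single edge cleanly partitions the remaining edges into two components, which collapses $m_1(e|T) + m_2(e|T)$ into something that does not depend on the structure of $T$ beyond the total edge-weight.

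First, I would invoke the remark preceding Theorem \ref{sz_sum} (the specialization to trees): for any edge $e \in E(T)$, if $T_1$ and $T_2$ are the connected components of $T \setminus e$, then $m_i(e|T) = \sum_{f \in E(T_i)} w'(f)$ for $i=1,2$. Since $\{E(T_1), E(T_2), \{e\}\}$ is a partition of $E(T)$, I get the identity
\begin{equation*}
m_1(e|T) + m_2(e|T) \;=\; \Big(\sum_{f \in E(T)} w'(f)\Big) - w'(e) \;=\; m(T) - w'(e),
\end{equation*}
where $m(T) := \sum_{f \in E(T)} w'(f)$.

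Substituting into the definition of $PI_e(T,w')$ gives
\begin{equation*}
PI_e(T,w') \;=\; \sum_{e \in E(T)} w'(e)\bigl(m(T) - w'(e)\bigr) \;=\; m(T)^2 \;-\; \sum_{e \in E(T)} w'(e)^2.
\end{equation*}

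This closed-form expression is trivially computable in $O(m)$ time: one linear scan of $E(T)$ accumulates $m(T)$ and $\sum_{e} w'(e)^2$, and the final answer is obtained with one multiplication and one subtraction. There is no real obstacle here; the only point worth being careful about is justifying the partition identity, which is immediate once one uses the tree structure (every edge other than $e$ lies in exactly one of the two components of $T \setminus e$). This completes the proof. \qed
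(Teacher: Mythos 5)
Your proof is correct, but it takes a genuinely different route from the paper. The paper proves this lemma by appealing to the same rooted-tree traversal used for the weighted total-Szeged index in Lemma \ref{alg1}: one roots $T$, computes for each vertex the accumulated edge-weight $w''(y)$ of its subtree in a bottom-up pass, and reads off $m_1(e|T)=w''(y)$ and $m_2(e|T)=m(T)-w''(y)-w'(e)$ for the up-edge $e$ of $y$. You instead observe that for a tree the quantity $m_1(e|T)+m_2(e|T)$ collapses to $m(T)-w'(e)$, because $\{E(T_1),E(T_2),\{e\}\}$ partitions $E(T)$ (every edge $f\neq e$ lies in exactly one component of $T\setminus e$ and is therefore strictly closer to one endpoint of $e$, while $e$ itself is equidistant from both). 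This yields the closed form
\begin{equation*}
PI_e(T,w') \;=\; m(T)^2 - \sum_{e\in E(T)} w'(e)^2,
\end{equation*}
which needs only a single linear scan and no traversal machinery. Your argument is the edge-analogue of the paper's own Lemma \ref{alg31} for the vertex-PI index of bipartite graphs (where $PI_v=n(G)m(G)$), with the extra correction term $-\sum_e w'(e)^2$ accounting for the fact that $e$ belongs to neither $M_1(e|T)$ nor $M_2(e|T)$. What your approach buys is an explicit formula that is simpler to implement and to verify (one can check it against the worked example in Section \ref{example}: for $T_2$ there, $m(T_2)^2-\sum w'(e)^2 = 9^2-(4+9+4+4)=60$, matching the paper); what the paper's approach buys is uniformity, since the same subtree-accumulation scheme handles $Sz_t$, where the product $m_1m_2$ genuinely requires the individual values and no such collapse is available.
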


\begin{proof}
The proof is very similar to the proof of Lemma \ref{alg1}. \qed
\end{proof}

\begin{lemma}
\label{alg31}
Let $(G,w,w')$ be a weighted bipartite graph and let 
$$n(G)=\sum_{u \in V(G)} w(u), \quad m(G)=\sum_{e \in E(G)} w'(e).$$
Then the weighted vertex-PI index $$PI_v(G,w,w')=n(G)m(G).$$
\end{lemma}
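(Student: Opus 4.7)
The plan is to exploit the structural fact that distinguishes bipartite graphs: along any edge $e=uv$, the endpoints lie in opposite color classes, so for every vertex $x$ the two distances $d_G(x,u)$ and $d_G(x,v)$ have opposite parity and in particular cannot be equal. My first step is to record this explicitly: if $e=uv$ is an edge of the bipartite graph $G$ with bipartition $(A,B)$, say $u\in A$ and $v\in B$, then for any $x$ we have $d_G(x,u)\equiv 1+d_G(x,v)\pmod 2$, hence $d_G(x,u)\neq d_G(x,v)$.

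From this I conclude that every vertex of $G$ belongs to exactly one of the sets $N_1(e\mid G)$ and $N_2(e\mid G)$; these two sets therefore form a partition of $V(G)$. Summing the weights $w$ over this partition gives, for each edge $e$,
\[
n_1(e\mid G)+n_2(e\mid G)=\sum_{x\in V(G)}w(x)=n(G).
\]

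Substituting into the definition of the weighted vertex-PI index and pulling the constant $n(G)$ out of the sum yields
\[
PI_v(G,w,w')=\sum_{e\in E(G)}w'(e)\bigl(n_1(e\mid G)+n_2(e\mid G)\bigr)=n(G)\sum_{e\in E(G)}w'(e)=n(G)m(G),
\]
which is the required identity. There is no real obstacle: the entire argument rests on the parity observation for bipartite graphs, and once that is in place the computation is a one-line rearrangement. \qed
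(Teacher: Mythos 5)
Your proof is correct and follows essentially the same route as the paper: both arguments rest on the observation that bipartiteness forces $V(G)=N_1(e|G)\cup N_2(e|G)$ for every edge $e$, so that $n_1(e|G)+n_2(e|G)=n(G)$, after which the identity is a one-line rearrangement. The only difference is that you spell out the parity justification for this partition, which the paper leaves implicit.
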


\begin{proof}
Since $G$ is bipartite, for any edge $e \in E(G)$ it holds $V(G) = N_1(e|G) \cup N_2(e|G)$. Therefore, $n_1(e|G) + n_2(e|G) = n(G)$ for any edge $e$. Then it is obvious that
$$PI_v(G,w,w') = \sum_{e \in E(G)}w'(e)n(G) = n(G)m(G).$$
\qed
\end{proof}

\begin{corollary}
\label{alg3}
Let $(T,w,w')$ be a weighted tree. Then the weighted vertex-PI index $PI_v(T,w,w')$ can be computed in $O(m)$ time.
\end{corollary}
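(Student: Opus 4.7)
The plan is to derive this corollary as a direct consequence of Lemma \ref{alg31} together with the observation that every tree is bipartite. Thus no new combinatorial argument is required; the only content of the proof is a complexity bookkeeping step.

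First I would invoke Lemma \ref{alg31} applied to the weighted tree $(T,w,w')$, which yields the closed form
\[
PI_v(T,w,w') \;=\; n(T)\,m(T),
\]
where $n(T)=\sum_{u\in V(T)}w(u)$ and $m(T)=\sum_{e\in E(T)}w'(e)$. Since a tree on $m$ edges has exactly $m+1$ vertices, both sums $n(T)$ and $m(T)$ can be computed by a single pass over the vertex list and a single pass over the edge list, each taking $O(m)$ time. A final multiplication of the two scalars is $O(1)$.

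There is no real obstacle here: the work is dominated by reading the input weights. Combining the linear-time evaluations of $n(T)$ and $m(T)$ with the closed form from Lemma \ref{alg31} gives the claimed $O(m)$ bound, which completes the proof.
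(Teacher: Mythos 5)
Your proof is correct and follows exactly the route the paper intends: the corollary is a direct consequence of Lemma \ref{alg31} (trees are bipartite), with $n(T)$ and $m(T)$ computed by a single linear pass. Nothing further is needed.
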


\noindent
Finally, using Lemma \ref{alg1}, Lemma \ref{alg2} and Corollary \ref{alg3}, we obtain the final result.
\begin{theorem}
\label{thm:main}
If $G$ is a benzenoid system with $m$ edges, then Algorithm \ref{alg:szeged} and Algorithm \ref{alg:pi} correctly compute $Sz_e(G)$ and $PI_e(G)$, respectively, and can be implemented in $O(m)$ time.
\end{theorem}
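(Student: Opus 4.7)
The plan splits cleanly into a correctness argument and a running-time argument.

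\textbf{Correctness.} Algorithm \ref{alg:szeged} returns $X_1+X_2+X_3 = \sum_{i=1}^{3} Sz_t(T_i, w_i, w'_i)$, which by Corollary \ref{sz_cor} equals $Sz_e(G)$. Similarly, Algorithm \ref{alg:pi} returns $Y_1+Y_2+Y_3 = \sum_{i=1}^{3}\bigl(PI_v(T_i,w_i,w'_i)+PI_e(T_i,w'_i)\bigr)$, which by Theorem \ref{pi_sum} equals $PI_e(G)$. So nothing new is needed for correctness beyond pointing to these two results.

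\textbf{Running time.} I would bound each block of the two algorithms by $O(m)$ separately. First, the call \texttt{QuotientTrees}$(G)$ together with the three calls \texttt{Weights}$(T_i,G)$ runs in $O(m)$ time in total by the result of \cite{chepoi-1996} invoked at the start of the section. Second, I would verify that each quotient tree $T_i$ has size $O(m)$: its vertex set is in bijection with the components (paths) of $G-E_i$, which partition $V(G)$, so $|V(T_i)| \leq |V(G)|$, and being a tree forces $|E(T_i)| \leq |V(T_i)|-1$. Since $G$ is connected, $|V(G)| = O(m)$, so each $T_i$ has $O(m)$ vertices and edges; the three weight assignments $(w_i,w'_i)$ have total storage $O(m)$ as well. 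Third, given these weighted trees, Lemma \ref{alg1} computes $Sz_t(T_i,w_i,w'_i)$ in $O(|E(T_i)|)$ time, while Lemma \ref{alg2} and Corollary \ref{alg3} compute $PI_e(T_i,w'_i)$ and $PI_v(T_i,w_i,w'_i)$ in the same asymptotic time. Summing over $i \in \{1,2,3\}$ contributes another $O(m)$, so both algorithms run in $O(m)$ total.

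\textbf{Main obstacle.} There is no substantive mathematical obstacle left by this point: everything has been arranged by Corollary \ref{sz_cor}, Theorem \ref{pi_sum}, and the three linear-time lemmas of this section. The only step requiring a moment of care is the size bound on the combined quotient-tree data, since the overall $O(m)$ claim rests on the fact that a constant number of linear-time subroutines, each run on an object of size $O(m)$, still aggregates to $O(m)$. Once that is noted, simply concatenating the three observations above yields the theorem for both algorithms.
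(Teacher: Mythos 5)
Your proposal is correct and follows essentially the same route as the paper: correctness is delegated to Corollary \ref{sz_cor} and Theorem \ref{pi_sum}, and the $O(m)$ bound comes from the linear-time construction of the weighted quotient trees (via \cite{chepoi-1996}) together with Lemma \ref{alg1}, Lemma \ref{alg2} and Corollary \ref{alg3}. The only difference is that you make explicit the $O(m)$ size bound on each $T_i$, a detail the paper leaves implicit.
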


\section{An example}
\label{example}

In this final section we give an example that demonstrates how the results from Section \ref{sec:theory} can be used to calculate the indices by hand. Consider the benzenoid system $G$ from Figure \ref{sistem} with $m=25$ edges.

\begin{figure}[H] 
\begin{center}
\includegraphics[scale=0.7]{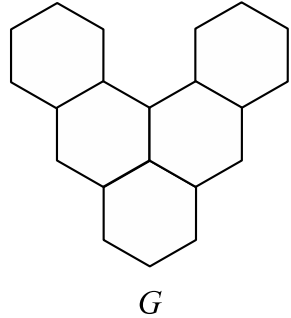}
\end{center}
\caption{\label{sistem} Benzenoid system $G$.}
\end{figure}

\noindent
First, we determine the graphs $G-E_i$ for $i \in \lbrace 1, 2, 3 \rbrace$, where $E_i$ is the set of edges of $G$ of the same direction. These graphs are represented in Figure \ref{components}. Next we determine the weighted quotient trees $(T_i, w_i, w_i')$, $1\le i\le 3$, see Figure \ref{trees}.

\begin{figure}[h!] 
\begin{center}
\includegraphics[scale=0.6]{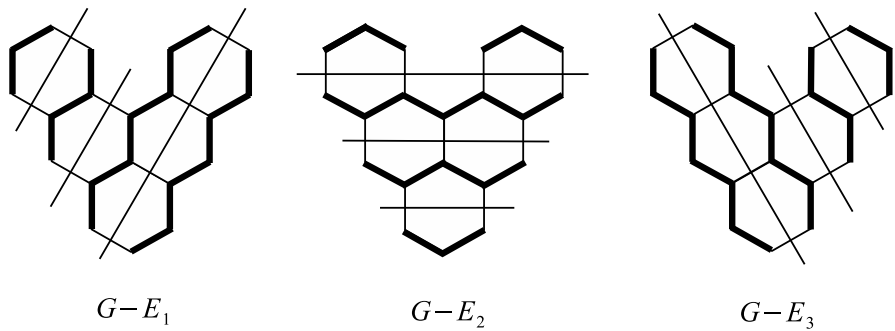}
\end{center}
\caption{\label{components} Graphs $G-E_1$, $G-E_2$, and $G-E_3$.}
\end{figure}

\begin{figure}[h!] 
\begin{center}
\includegraphics[scale=0.6]{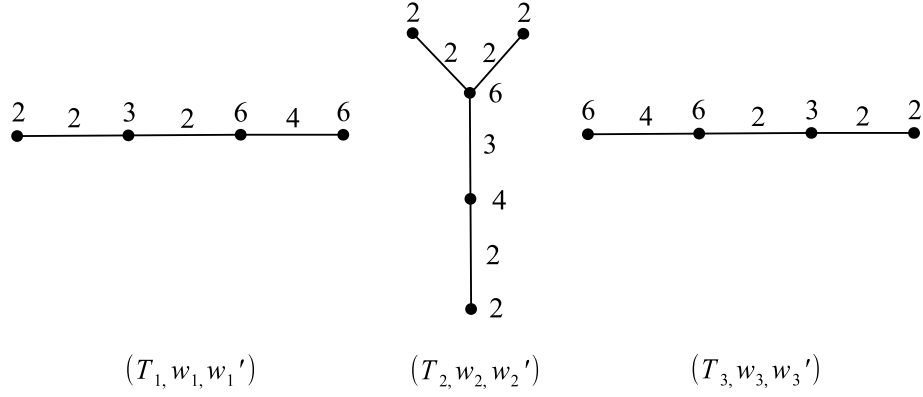}
\end{center}
\caption{\label{trees} Weighted quotient trees.}
\end{figure}

\noindent
Now we are ready to compute the edge-Szeged index. We start with the following quantities:
$$Sz_t(T_1,w_1,w_1') = Sz_t(T_3,w_3,w_3') = 2 \cdot 2 \cdot 21 + 2 \cdot 7 \cdot 16 + 4 \cdot 15 \cdot 6 = 668,$$
$$Sz_t(T_2,w_2,w_2') = 2 \cdot 2 \cdot 21 + 3 \cdot 8 \cdot 14 + 2 \cdot 21 \cdot 2 + 2 \cdot 21 \cdot 2 = 588.$$

\noindent
Therefore, by Corollary \ref{sz_cor},
$$Sz_e(G) = 668 + 588 + 668 = 1924.$$

\noindent
Next, we compute the PI index. We start with the following quantities:
$$PI_e(T_1,w_1') = PI_e(T_3,w_3') = 2 \cdot (0 + 6) + 2 \cdot (2+4) + 4 \cdot (4 + 0) = 40,$$
$$PI_e(T_2,w_2') = 2 \cdot (0 + 7) + 3 \cdot (2 + 4) + 2 \cdot (7 + 0) + 2 \cdot (7 + 0) = 60,$$
$$PI_v(T_1,w_1,w_1') = PI_v(T_3, w_3, w_3') = (2+3+6+6) \cdot (2+2+4) = 136,$$
$$PI_v(T_2,w_2,w_2') = (2+4+6+2+2) \cdot (2+3+2+2) = 144.$$

\noindent
Therefore, by Theorem \ref{pi_sum},
$$PI_e(G) = (40 + 136) + (60 + 144) + (40 + 136) = 556.$$

\section{Generalization to weighted graphs}

The method described in this paper can also be applied to calculate the edge-Szeged index and the PI index of weighted benzenoid systems. More precisely, if $(G,w')$ is an edge-weighted benzenoid system and $T_1$, $T_2$, $T_3$ are its quotient trees, we can extend the quotient trees $T_i$, $i \in \lbrace 1, 2, 3 \rbrace$, to weighted trees $(T_i, u'_i)$ and $(T_i, u_i, u'_i)$ as follows: 
\begin{itemize}
\item for $x \in V(T_i)$, let $u_i(x)$ be the sum of weights of edges in the component (path) $x$ of $G_i$;
\item for $e = xy \in E(T_i)$, let $u_i'(e)$ be the sum of weights of edges between components (paths) $x$ and $y$.
\end{itemize}
With this notation we can prove the following theorem. The proof is omitted since the ideas in the proof are similar as in Section \ref{sec:theory}.
\begin{theorem}
\label{sz_cor}
If $(G,w')$ is an edge-weighted benzenoid system and $(T_i,u_i,u_i')$, $1\le i\le 3$, are the corresponding weighted quotient trees, then
$$ Sz_e(G,w')= Sz_t(T_1,u_1,u_1') + Sz_t(T_2, u_2,u_2') + Sz_t(T_3, u_3, u_3'),$$
$$PI_e(G,w')= \sum_{i=1}^{3}\big( PI_e(T_i,u_i') + PI_v(T_i,u_i,u_i') \big).$$
\end{theorem}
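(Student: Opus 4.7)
The plan is to follow the structure of Corollary~\ref{sz_cor} and Theorem~\ref{pi_sum}, replacing the counting weights $w_i, w_i'$ throughout with the new weights $u_i, u_i'$. The key intermediate step I would establish is an edge-weighted analogue of Lemma~\ref{lema}: for an elementary cut $C$ of $G$ corresponding to an edge $e = xy \in E(T_i)$,
$$m_1(C|G) = n_1(e|T_i) + m_1(e|T_i), \qquad m_2(C|G) = n_2(e|T_i) + m_2(e|T_i),$$
where on the left $m_k(C|G) = \sum_{f \in M_k(C|G)} w'(f)$, while on the right the quantities are evaluated in $(T_i, u_i, u_i')$.

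To prove this lemma, I would invoke the basic structural fact that $E(G)$ is partitioned by the edges lying in path components of $G - E_i$ together with the edges lying in elementary cuts of $\mathcal{C}_i$. Under this partition, summing $w'$ over the edges of the path $P \subseteq G - E_i$ corresponding to a vertex $z \in V(T_i)$ yields $u_i(z)$ by the definition of $u_i$, and summing $w'$ over the edges of an elementary cut $C'$ corresponding to an edge $f \in E(T_i)$ yields $u_i'(f)$. Combined with the correspondence, already established in the proof of Lemma~\ref{lema}, between vertices/edges of $T_i$ on the $x$-side of $e$ and paths/cuts of $G$ whose edges lie in $M_1(C|G)$, this gives the weighted identity.

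With the weighted lemma in hand, both claimed identities follow from essentially the same two-line computations as in Section~\ref{sec:theory}. For $Sz_e$, one groups edges of $G$ by elementary cuts and uses $\sum_{f \in C} w'(f) = u_i'(e_C)$ to obtain
$$Sz_e(G, w') = \sum_{i=1}^3 \sum_{e \in E(T_i)} u_i'(e)\bigl(n_1(e|T_i) + m_1(e|T_i)\bigr)\bigl(n_2(e|T_i) + m_2(e|T_i)\bigr),$$
and then expands the product exactly as in Theorem~\ref{sz_sum}, invoking Lemma~\ref{szeged} to identify the sum with $\sum_i Sz_t(T_i, u_i, u_i')$. For $PI_e$, the identical grouping gives
$$PI_e(G, w') = \sum_{i=1}^3 \sum_{e \in E(T_i)} u_i'(e)\bigl(n_1(e|T_i) + n_2(e|T_i) + m_1(e|T_i) + m_2(e|T_i)\bigr),$$
which separates into $\sum_i \bigl(PI_v(T_i, u_i, u_i') + PI_e(T_i, u_i')\bigr)$.

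The only point requiring genuine care is the weighted version of Lemma~\ref{lema}: one must verify that the bookkeeping still works once the unit counting measure on $E(G)$ is replaced by the weight function $w'$. Once this is confirmed, the remainder transfers verbatim, since no later manipulation in Theorem~\ref{sz_sum} or Theorem~\ref{pi_sum} uses unit weights in any essential way.
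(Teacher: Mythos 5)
Your proposal is correct and follows exactly the route the paper intends: the paper omits the proof of this theorem, stating only that the ideas are the same as in Section~\ref{sec:theory}, and your argument fills in precisely those details --- the weighted analogue of Lemma~\ref{lema} via the partition of $E(G)$ into path edges and cut edges, followed by the same grouping-by-cuts computations as in Theorem~\ref{sz_sum} and Theorem~\ref{pi_sum}. The one point you flag as needing care (that the bookkeeping survives replacing the counting measure by $w'$) is handled correctly by your observation that summing $w'$ over a path or cut yields $u_i$ or $u_i'$ by definition.
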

Following this result, fast algorithms for the edge-Szeged index and the PI index of edge-weighted benzenoid systems can be obtained as well.

\baselineskip=16pt

\end{document}